\documentclass[conference]{IEEEtran}
\IEEEoverridecommandlockouts
\usepackage{cite}
\usepackage{amsmath,amssymb,amsfonts}

\newcommand{\range}[1]{\operatorname{range}\!\left(#1\right)}

\usepackage{amsthm}

\newtheorem{theorem}{Theorem}[section]

\usepackage{algorithm}
\usepackage{algpseudocode}
\usepackage{graphicx}
\usepackage{textcomp}
\usepackage{xcolor}
\usepackage{nicematrix}

\def\BibTeX{{\rm B\kern-.05em{\sc i\kern-.025em b}\kern-.08em
    T\kern-.1667em\lower.7ex\hbox{E}\kern-.125emX}}

\usepackage{todonotes}
\setuptodonotes{inline}

\usepackage[hidelinks]{hyperref}

\usepackage{tikz}
\usetikzlibrary{arrows.meta,positioning}

\usepackage{subcaption}

\usepackage{siunitx}

\AtBeginDocument{%

}

\begin{document}

\title{A GPU-Accelerated Blocked Adaptive Randomized Range Finder
Based on an Implicit Householder QR Decomposition
% \thanks{Identify applicable funding agency here. If none, delete this.}
}

\author{\IEEEauthorblockN{1\textsuperscript{st} Carolin Penke}
\IEEEauthorblockA{\textit{Jülich Supercomputing Centre} \\
\textit{Forschungszentrum Jülich}\\
Jülich, Germany \\
c.penke@fz-juelich.de}
\and
\IEEEauthorblockN{2\textsuperscript{nd} Andreas Herten}
\IEEEauthorblockA{\textit{Jülich Supercomputing Centre} \\
\textit{Forschungszentrum Jülich}\\
Jülich, Germany \\
a.herten@fz-juelich.de}
}

\maketitle

\begin{abstract}
Low-rank methods can reduce the memory and computational requirements of deep neural network training in approaches such as GaLore. Randomized range finders offer an attractive alternative to singular value decompositions, particularly when the required rank is determined adaptively from a prescribed approximation tolerance. We introduce a blocked adaptive randomized range finder based on an implicit Householder QR decomposition and an optimized hybrid CPU--GPU implementation. The proposed method avoids explicit reorthogonalization. Numerical experiments show that it preserves orthogonality and approximation accuracy in regimes where block Gram--Schmidt without reorthogonalization becomes unstable. The blocked formulation exposes matrix--matrix operations and enables overlap of CPU panel factorization with GPU updates. On an NVIDIA GH200, the overlapped implementation reduces the runtime for the largest tested matrix from \qty{9.91}{\second} on the CPU to \qty{0.407}{\second}. The method provides a stable and efficient building block for low-rank approximation on heterogeneous systems with applications in computational science and engineering.
\end{abstract}

\begin{IEEEkeywords}
randomized numerical linear algebra, randomized range finder,
low-rank approximation, Householder QR factorization,
GPU acceleration, heterogeneous computing, low-rank deep learning
\end{IEEEkeywords}

\section{Introduction}
% % Deep neural networks, such as GPT-like transformer architectures, are increasingly prevalent and consume significant portions of global computing infrastructure, predominantly using GPUs. These models demand vast datasets and are constrained by available compute capabilities during both the pre-training stage on supercomputers and the fine-tuning stage on smaller workstations. Enhancing training efficiency is therefore highly impactful. This work introduces techniques to leverage low-rank structures for reducing memory requirements and outlines a method to efficiently acquire  the necessary subspaces by using a randomized range finder. We propose a GPU-accelerated algorithm, based on the Householder QR decomposition that is also applicable beyond deep learning contexts.

% Where do you need dominant subspaces.

% AI Galore, as motivating example throughout the paper

% SVD

% Randomized numerical linear algebra

% GPUs

% Blocked algorithm.

Dominant low-dimensional subspaces of a matrix, and their approximations, form a crucial element in many methods for solving problems in computational science, engineering, and machine learning. For a matrix $A$, its dominant left singular subspace is spanned by the singular vectors associated with its largest singular values.

Such a subspace, represented by an orthonormal basis in matrix form, can serve as a starting point for efficiently computing an approximate truncated singular value decomposition (SVD). The SVD forms the basis of principal component analysis, a cornerstone of data analysis and feature selection, where directions are identified that capture the largest variation in a dataset.

Our main motivating example comes from the training of deep neural networks. Modern architectures such as transformers place high demands on computation and GPU memory during pre-training and fine-tuning. Low-rank methods reduce these requirements by operating in reduced subspaces. In particular, GaLore and its successors project gradients onto a low-dimensional subspace and represent optimizer states in this reduced space, thereby reducing memory consumption~\cite{zhaoGaLoreMemoryEfficientLLM2024}. Efficiently computing a basis for such subspaces is an important component of these methods.

Classical model order reduction also provides applications, for example in the POD method~\cite{POD}.

A common computational problem underlying these examples is the following. Given a matrix $A\in\mathbb{R}^{m\times n}$, the goal is to compute an orthonormal basis $Q\in\mathbb{R}^{m\times k}$ that approximately captures the dominant range of $A$, in the sense that
\begin{align}
A &\approx QQ^TA,
\qquad
Q^TQ=I_k,
\label{Eq:rangeQ}
\end{align}
where $I_k$ denotes an identity matrix of size $k$. 
This basis yields the low-rank approximation
\begin{gather}
A\approx QB,
\qquad
B=Q^TA\in\mathbb{R}^{k\times n}.
\label{Eq:QBIntro}
\end{gather}

% Since $B$ has only $k$ rows, its SVD can be computed at a substantially lower cost, and the resulting singular vectors can be mapped back through $Q$ to form an approximate low-rank SVD~\cite{halkoFindingStructureRandomness2011}.

Given an SVD $A=U\Sigma V^T$, the Eckart--Young theorem gives a best rank-$k$ approximation by choosing $Q$ as the first $k$ columns of $U$, yielding
\begin{gather*}
    \|A-QQ^TA\|_2=\sigma_{k+1}.
\end{gather*}
Computing these vectors through a full SVD, for example through bidiagonalization followed by a divide-and-conquer method, however, is expensive~\cite{golub13}. In many applications, only a low-dimensional dominant subspace basis $Q$ is required, so computing a full factorization performs substantially more work than necessary.

Randomized numerical linear algebra provides efficient alternatives to classical matrix algorithms by incorporating random sampling while retaining numerical or probabilistic guarantees~\cite{murray2023randomizednumericallinearalgebra,halkoFindingStructureRandomness2011}. \emph{Randomized range finders} are a widely used class of methods for computing dominant subspaces. They can construct a basis either for a prescribed target rank or adaptively until a specified approximation tolerance is reached.

Blocked randomized range finders expose matrix--matrix operations that are well suited to modern memory hierarchies and accelerator architectures such as GPUs. Existing blocked adaptive formulations commonly rely on Gram--Schmidt orthogonalization and explicit reorthogonalization~\cite{martinssonRandomizedBlockedAlgorithm2016,wenjianyuyugu+andyaohangliEfficientRandomizedAlgorithms}. Besides their numerical stability implications, the resulting dependencies can limit the ability to overlap panel processing with matrix updates in hybrid CPU--GPU implementations.

In this work, we generalize the framework of blocked Householder QR factorizations~\cite{higham2002,schreiberStorageEfficientWY1987,elmrothApplyingRecursionSerial2000} to obtain a blocked adaptive randomized range finder. By representing the basis implicitly through blocked Householder reflectors, the resulting formulation avoids explicit reorthogonalization while exposing predominantly matrix--matrix operations. For the GPU implementation, we adapt the hybrid QR factorization strategy used in the GPU-accelerated MAGMA library~\cite{magma}, restructuring the computation to efficiently overlap CPU panel factorization with GPU-based matrix updates. Numerical experiments evaluate both the approximation quality and numerical stability of the proposed formulation as well as the performance of its hybrid CPU--GPU implementation on an NVIDIA GH200.

\section{Background and Related Work}\label{Sec:Background}

The simplest version of a randomized range finder~\cite{halkoFindingStructureRandomness2011} approximates the dominant range of $A$ by forming $Y=A\Omega$ with a random matrix $\Omega$ and computing an orthonormal basis $Q$ for $\range{Y}$.

\begin{figure}[H]
\centering
\fbox{%
  \begin{minipage}{0.95\linewidth}
         \begin{equation*}
          \begin{aligned}
            1.\quad & \Omega & \gets& \texttt{randn(n,r)} && \text{(fill $\Omega$ with random values)} \\
            2.\quad & Y & \gets& A\Omega && \text{(matrix multiply)} \\
            3.\quad & Q & \gets& \text{orth}(Y) && \text{(e.g.\ QR decomposition)}
          \end{aligned}
        \end{equation*}
  \end{minipage}%
}
\caption{Randomized range finder with fixed rank $r$.}
\label{Fig:RandRangeFinder_fixed}
\end{figure}

When an oversampling parameter $p\in\mathbb{N}$, $0\leq p\leq r$, is introduced, this procedure can compute a highly accurate approximation to the best-approximating subspace of rank $r-p$~\cite{halkoFindingStructureRandomness2011}.

Instead of prescribing the rank, one may seek a rank $r$ as low as possible while satisfying a given tolerance $\sigma$,
\begin{align}\label{Eq:tolerance}
  \|A-QQ^TA\|_2 \leq \sigma.
\end{align}
The \emph{Adaptive Randomized Range Finder} (Algorithm 4.2 in~\cite{halkoFindingStructureRandomness2011}) constructs the basis vectors successively until a probabilistic stopping criterion indicates that the desired approximation accuracy has been reached. The original formulation uses Gram--Schmidt orthogonalization of successively sampled vectors.

A blocked variant of the Adaptive Randomized Range Finder is presented in \cite{martinssonRandomizedBlockedAlgorithm2016} and given in Algorithm \ref{Alg:BlockAdaptiveRandomizedRangeFinder_detErr}. 

\begin{algorithm}
  \caption{Block randomized range finder \cite{martinssonRandomizedBlockedAlgorithm2016}}
  \label{Alg:BlockAdaptiveRandomizedRangeFinder_detErr}
  \begin{algorithmic}[1]
    \Require A matrix $A\in\mathbb{R}^{m\times n}$, a tolerance $\sigma\in\mathbb{R}$, $\sigma>0$, and a block size $b\in\mathbb{N}^+$.
    \While{$\|A\|_2 \geq \sigma$}
        \State Fill $\Omega\in\mathbb{R}^{n\times b}$ with values from a standard Gaussian distribution.\label{Al}
        \State $Q_\text{new}, \_ \gets\text{orth}(A\Omega)$ \label{Alg:BlockAdaptiveRandomizedRangeFinder_detErr:Ortho}
        \State $Q_{\text{new}}, \_ \gets \text{orth}((I - QQ^T)Q_{\text{new}})$
        \Comment{reorth.}\label{Alg:BlockAdaptiveRandomizedRangeFinder_detErr:reorth}
        \State $Q \gets \begin{bmatrix}Q & Q_\text{new}\end{bmatrix}$
        \State $B_{\text{new}} = Q_{new}^T A$\label{Alg:BlockAdaptiveRandomizedRangeFinder_detErr:QTA}
        \State $B \gets \begin{bmatrix}B \\ B_\text{new}\end{bmatrix}$
        \State $A \gets A - Q_\text{new}B_\text{new}$\label{Alg:BlockAdaptiveRandomizedRangeFinder_detErr:residual}
    \EndWhile
    \Ensure $Q$ fulfilling \eqref{Eq:tolerance}.
  \end{algorithmic}
\end{algorithm}

The array $A$ is updated in place to represent the residual $A_\text{orig}-QB$. Its norm can therefore be used as a deterministic stopping criterion. In addition to the subspace basis $Q$, the projected matrix $B=Q^TA_\text{orig}$ is computed.

Another stopping criterion is devised in~\cite{wenjianyuyugu+andyaohangliEfficientRandomizedAlgorithms} based on the Frobenius norm of the residual matrix $\|A-QB\|_F$. This norm can be computed from the norms of the generated panels of $B$. For sparse matrices in particular, this is important because explicitly forming the residual matrix can lead to fill-in. The authors further develop the algorithm to reduce the number of passes over $A$.

% All methods can be extended to include a power iteration scheme to reduce the error in particular for slowly decaying singular values. Here, $A$ is replaced with $(AA^T)^pA$, where $p$ is a small integer. 

The surveyed blocked algorithms orthogonalize sampled panels through some form of Gram--Schmidt and rely on explicit reorthogonalization to keep the accumulated basis sufficiently orthogonal, particularly in ill-conditioned regimes.

To efficiently use BLAS level-3 routines and achieve the high arithmetic intensity needed to saturate GPUs, a blocked Gram--Schmidt method is required. Such blocking improves computational efficiency but does not provide unconditional numerical stability~\cite{Carson_2022}. Recent work on reorthogonalized Pythagorean variants of BCGS has shown that strong orthogonality guarantees can be retained while reducing synchronization requirements~\cite{Carson25}. These developments make BCGS an important alternative to serve as a basis for an adaptive randomized range finder.

Householder QR is a standard method for orthogonalization and is implemented in numerical linear algebra libraries such as LAPACK. It avoids the need for explicit reorthogonalization and provides strong bounds on the error in the computed orthogonal factor~\cite{higham2002}. Using the compact WY representation~\cite{schreiberStorageEfficientWY1987}, most of the blocked factorization can be expressed in terms of level-3 BLAS operations.

Blocked Householder QR~\cite{elmrothApplyingRecursionSerial2000} is moreover a mature computational primitive. GPU implementations are available in which CPU panel factorization is overlapped with GPU updates and communication~\cite{magma}. These properties make blocked Householder QR a natural starting point for developing a Gram--Schmidt-free adaptive randomized range finder.

\section{A Recursive QB factorization based on Householder QR}\label{Sec:Prelim}

The goal of this section is to derive an algorithm that implicitly performs a classic blocked QR factorization 
\begin{gather*}
    A\Omega = QR,
\end{gather*}
where $\Omega\in\mathbb{R}^{n\times r}$ is a Gaussian random matrix. 

$Q$ should be given in factored form representation, similar to the \texttt{geqrf} LAPACK routine, where $Q$ is represented by Householder vectors $v_i$ and scalars $\tau_i$,

We are not interested in $R$, but in the QB decomposition approximating $A$ determined by $Q$,
\begin{gather}\label{Eq:QB}
    A ~\approx QB,\quad Q\in\mathbb{R}^{m\times r},\ B=Q^TA\in\mathbb{R}^{r\times n}.
\end{gather}

However, $r$ is not known a priori, as we aim to develop an adaptive randomized range finder. We aim to compute the Householder reflector representations successively, to check the subspace accuracy \eqref{Eq:tolerance} at each step, and stop as soon as it reaches the desired tolerance.

The compact WY representation~\cite{schreiberStorageEfficientWY1987} is used to represent block reflectors
\begin{gather}\label{Eq:StorEffQR}
    Q_i = I_m - V_i T_i V_i^T,
\end{gather}
where $V_i\in\mathbb{R}^{m\times b}$ contains Householder vectors and $T_i\in\mathbb{R}^{b\times b}$ is a triangular matrix.  $Q$ can be represented as a product of block reflectors $Q=\prod_{i} Q_i$. Applying Q resolves into a series of matrix-matrix products.

% In our implementation, these are also put along with the scalar factors $\tau_i$. They can be used in an efficient the application of the block reflectors. 

The key question to design a blocked adaptive range finder algorithm based on the Householder QR decomposition is the following: Given a block reflector from the QR decomposition of a sampled panel $A\Omega_0$, how can it be used to update $A$, such that that next block reflector can be computed from a QR decomposition of of the next sampled panel $A\Omega_1$. The following observation clarifies this issue. 

\begin{theorem}\label{Lem:BlockQROmega}
Let $A\in\mathbb{R}^{m\times n}$ and $\Omega = \begin{bmatrix}
        \Omega_1 & \Omega_2
    \end{bmatrix}\in\mathbb{R}^{n\times r}$, where $\Omega_1$ contains the first $b$ columns of $\Omega$. Given the QR factorization
\begin{gather*}
Q_1\begin{bmatrix}
    R_1\\0
\end{bmatrix}:= A\Omega_1,
\end{gather*}
the updated $A$
\begin{gather}\label{Eq:UpdatedA}
    \tilde{A} = \begin{bmatrix}\tilde{A_1}\\ \tilde{A_2}\end{bmatrix}
    := Q_1^TA,
\end{gather}
and the $QR$ decomposition
\begin{gather*}
    Q_2R_2:= \tilde{A}_2\Omega_2,
\end{gather*}
then
\begin{align}\label{Eq:Q_product}
    Q:=Q_1\begin{bmatrix}I_b&\\&Q_2\end{bmatrix}
\end{align}
is the orthogonal factor of a QR factorization of $A\Omega$, i.e. $Q^TA\Omega=R$ is upper triangular. 
\end{theorem}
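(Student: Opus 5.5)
The plan is to verify directly that $Q$ is orthogonal and that $Q^TA\Omega$ is upper triangular, by computing it block column by block column. Orthogonality is immediate: $Q_1$ is orthogonal (a full $m\times m$ orthogonal factor, as produced in factored form by \texttt{geqrf}), and $Q_2$ is the $(m-b)\times(m-b)$ orthogonal factor of the full QR factorization of $\tilde A_2\Omega_2\in\mathbb{R}^{(m-b)\times(r-b)}$. Hence $\operatorname{diag}(I_b,Q_2)$ is orthogonal, and so is the product \eqref{Eq:Q_product}.

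Next I expand
\begin{gather*}
Q^TA\Omega=\begin{bmatrix}I_b&\\&Q_2^T\end{bmatrix}Q_1^TA\begin{bmatrix}\Omega_1&\Omega_2\end{bmatrix}
=\begin{bmatrix}I_b&\\&Q_2^T\end{bmatrix}\begin{bmatrix}Q_1^TA\Omega_1 & \tilde A\Omega_2\end{bmatrix}.
\end{gather*}
The first block column uses the given factorization, $Q_1^TA\Omega_1=\begin{bmatrix}R_1\\0\end{bmatrix}$. Applying $\operatorname{diag}(I_b,Q_2^T)$ leaves $R_1$ unchanged and maps the zero block to zero. For the second block column, \eqref{Eq:UpdatedA} gives
\begin{gather*}
\tilde A\Omega_2=\begin{bmatrix}\tilde A_1\Omega_2\\ \tilde A_2\Omega_2\end{bmatrix},
\end{gather*}
and after multiplication this becomes $\begin{bmatrix}\tilde A_1\Omega_2\\ Q_2^T\tilde A_2\Omega_2\end{bmatrix}=\begin{bmatrix}\tilde A_1\Omega_2\\ R_2\end{bmatrix}$.

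Assembling the two block columns gives
\begin{gather*}
R=\begin{bmatrix}R_1&\tilde A_1\Omega_2\\0&R_2\end{bmatrix}.
\end{gather*}
This is block upper triangular with upper triangular diagonal blocks, so $R$ is upper triangular in the $m\times r$ trapezoidal sense. It follows that $A\Omega=QR$ with $Q$ orthogonal, which is the claim.

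The only delicate point is dimensional bookkeeping rather than any real obstacle. One must read $Q_2R_2$ as a full QR factorization, with $Q_2$ square of size $m-b$ and $R_2$ of size $(m-b)\times(r-b)$ with zeros below its top triangle, so that the padded block $\operatorname{diag}(I_b,Q_2)$ is conformal with $Q_1$. If a thin factorization were meant instead, I would extend $Q_2$ to a square orthogonal matrix and pad $R_2$ with zero rows; the argument is otherwise unchanged. I also assume $b\le m$ so that the partition of $\tilde A$ into its first $b$ rows $\tilde A_1$ and remaining rows $\tilde A_2$ is well defined.
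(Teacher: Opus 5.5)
Your proposal is correct and takes the same route as the paper: substitute the definitions and apply $\operatorname{diag}(I_b,Q_2^T)Q_1^T$ block column by block column to get a block upper triangular $R$ with diagonal blocks $R_1$ and $R_2$. You also check orthogonality of $Q$ and the full-versus-thin conformality of $Q_2$, which the paper leaves implicit, and your off-diagonal block $\tilde A_1\Omega_2$ is the correct one (the paper's displayed result writes $\tilde A_1\Omega_1$, a typo).
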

\begin{proof} Simply substituting definitions yields
    \begin{align*}
        Q^TA\Omega 
        % &= \begin{bmatrix}I_b&\\&Q_2^T\end{bmatrix}Q_1^TA\Omega \\
        % &= \begin{bmatrix}I_b&\\&Q_2^T\end{bmatrix}Q_1^T\begin{bmatrix}A\Omega_1 & A\Omega_2\end{bmatrix}\\
        % &= \begin{bmatrix}I_b&\\&Q_2^T\end{bmatrix}
        % \begin{bNiceMatrix}
        % R_1 & \Block{2-1}{Q_1^TA\Omega_2} \\
        % 0 &
        % \end{bNiceMatrix}
        % \\
        % &= \begin{bmatrix}I_b&\\&Q_2^T\end{bmatrix}\begin{bmatrix}R_1 & \tilde{A}_1 \Omega_1\\0&Q_2R_2\end{bmatrix}\\
        &= \begin{bmatrix}R_1 & \tilde{A}_1 \Omega_1\\0&R_2\end{bmatrix}.
    \end{align*}
\end{proof}

In factored form representation, the product of orthogonal reflectors \eqref{Eq:Q_product} is not formed explicitly, but the Householder vectors and their corresponding scaling factors and/or triangular block factors $T_i$ can just be concatenated in one list. 

\autoref{Lem:BlockQROmega} provides a recipe to recursively perform the classic randomized range finder introduced in \autoref{Sec:Background}.  This variant, given in Figure \ref{Fig:RecursiveQB}, is free of Gram--Schmidt orthogonalization and rich in level-3 BLAS. 

\begin{figure}[H]
\centering
\fbox{%
  \begin{minipage}{0.95\linewidth}
  \begin{enumerate}
    \item Sample a matrix panel $Y=A\Omega_1$.
    \item Factorize the panel $Q_1R_1 = Y$.
    \item Update $A\gets Q_1^TA$, retrieve $B_1$ as upper block-row.
    \item Repeat for the lower part of $A$ with a new random matrix.
  \end{enumerate}
  \end{minipage}%
}
\caption{Recursive QB factorization based on implicit Householder QR factorization of $A\Omega$.}
\label{Fig:RecursiveQB}
\end{figure}

Similar to the standard blocked QR factorization, the height of the processed panel decreases with each iteration, as does the height of the matrix that needs to be updated. In contrast to the standard QR factorization, the width of the matrix that needs to be updated does not decrease. 

The upper part of the updated matrix, $\tilde{A}_1$ in \autoref{Lem:BlockQROmega}, is the upper part of $B$ in the $A=QB$ factorization. The blocked adaptive randomized range finder developed in the next section is based on the following intuition: When the norm of the lower part of the updated matrix $\tilde{A}_2$ is low enough, we have a low-rank approximation $A\approx Q_1B_1$, with $B_1 = \tilde{A}_1$.

The computed row panels of $B$ can be used to provide a cheap deterministic error criterion based on their Frobenius norms as proposed in \cite{wenjianyuyugu+andyaohangliEfficientRandomizedAlgorithms} for an algorithm based on Gram--Schmidt orthogonalization.

Theorem 3.1 in ~\cite{wenjianyuyugu+andyaohangliEfficientRandomizedAlgorithms} states that for an orthogonal matrix $Q$ and $B=Q^TA = \begin{bmatrix}B_1^T & \cdots B_j^T\end{bmatrix}^T$ we have
\begin{align*}
\|A-QB\|_F^2 = \|A\|_F^2 - \|B\|_F^2 = \|A\|_F^2 - \sum_{i=1}^j \|B_i\|_F^2.
\end{align*}
We initialize a scalar $E=\|A\|_F^2$ and subtract each squared panel norm $\|B_i\|_F^2$, until $\sigma^2$ is reached. This ensures that \eqref{Eq:tolerance} holds in the Frobenius norm. The Frobenius norm is easy to compute and popular in machine learning applications. An alternative approach for the spectral norm is more labour-intensive and checks the residual $\|A-QB\|_2$, given by the spectral norm of the remaining lower part of $B$. 

In contrast to the standard QR factorization, it is not possible to store all algorithm matrix outputs in place of $A$, because $B$ is not upper triangular. During the process, outlined in Figure \ref{Fig:RecursiveQB}, building up $B$ in place of $A$ is the natural approach. The Householder vectors need to be stored in an extra array.

\section{The Householder Blocked Adaptive Randomized
Range Finder}\label{Sec:Algorithm}
We divide $A$ into blocks, store Householder vectors in $V$, which is lower triangular with ones on the diagonal. Block rows of $B$ are computed successively and stored in the memory location of $A$. Each storage-efficient factorization \cite{schreiberStorageEfficientWY1987} of a sampled block column yields an upper triangular matrix block, all of which are stored in $T$.

As a notation for referring to blocks, block rows and block columns we use
\newcommand{\rotvert}{\rotatebox[origin=c]{90}{$\vert$}}
\begin{gather*}
A = \begin{bmatrix} 
  A_{0,0} & \cdots & A_{0,k}\\
  \vdots & \ddots & \vdots\\
  A_{j,0} & \cdots & A_{j,k}
\end{bmatrix},\ 
V = \begin{bmatrix} 
  V_{0,0} & \cdots & V_{0,k}\\
  \vdots & \ddots & \vdots\\
  V_{j,0} & \cdots & V_{j,k}
\end{bmatrix},\\  
B=
\begin{bmatrix}
\rotvert&B_0 &\rotvert \\
  &\vdots&\\
  \rotvert & B_{j} & \rotvert
\end{bmatrix},\ 
T = \begin{bmatrix}
T_0 & \cdots& T_k,
\end{bmatrix}.
\end{gather*}

We use colon notation to  refer to a submatrix of a matrix $M$ as $M_{i:l,p:q}$. 
The orthogonal subspace basis in \eqref{Eq:QB} is represented as  $Q = \prod_{i=0}^k(I - V_{0:j,i} T_i V_{0:j,i}^T)$. 

\begin{algorithm}[h]
  \caption{Householder Blocked Adaptive Randomized Range Finder \label{Alg:QRAdaptiveRandomizedRangeFinder}}
  \begin{algorithmic}[1]
    \Require A matrix $A \in \mathbb{R}^{m \times n}$, a tolerance $\sigma$, and a block size $b$.
    \State $E \gets \|A\|_F^2$
    \State $B \gets A$
    \State Initialize $V$ with all zeros.
    \State $i \gets 0$
    \While{$E > \sigma^2$}
      \State Fill $\Omega \in \mathbb{R}^{n \times b}$ with values from a standard Gaussian distribution.\label{Alg:QRAdaptiveRandomizedRangeFinder:Omega}
      \State $(V_{i:j, i}, T_i) \gets \text{qr}(B_{i:j,} \Omega)$
      \Comment{\texttt{geqrt}}\label{Step:geqrt}
      \State $B_{i:j} \gets (I - V_{i:j,i} T_i V_{i:j,i}^T) B_{i:j}$ \label{Alg:Bupdate}
      \State $E \gets E - \|B_{i}\|_F^2$ \label{Alg:QRAdaptiveRandomizedRangeFinder:E}
      \State $i \gets i + 1$
    \EndWhile
    \State $V \gets V_{:, 0:i-1}$
    \State $B \gets B_{0:i-1, :}$
    \State $r \gets i\cdot b$
    \Ensure Rank $r$, Householder vectors $V \in \mathbb{R}^{m \times r}$, $B \in \mathbb{R}^{r \times n}$, $T_0, \dots, T_{i-1} \in \mathbb{R}^{b \times b}$ such that $\|A - QB\|_{F} \leq \sigma$, where $Q = \prod_{l=0}^{i-1} (I_m - V_{0:j,l} T_l V_{0:j,l}^T)$.
  \end{algorithmic}
\end{algorithm}

For simpler notation we assume the matrix dimensions to be divisible by the block size $b$, and assume all blocks to have dimensions $b\times b$.
Algorithm \ref{Alg:QRAdaptiveRandomizedRangeFinder} successively creates the block columns of $V$ and block rows of $B$. Step \ref{Step:geqrt} performs a QR decomposition of a sampled panel, resulting in a compact WY representation \eqref{Eq:StorEffQR}, as given by LAPACK routine \texttt{geqrt}.  After $\min{(j+1,k+1)}$ steps, $E$ is numerically zero and the loop is guaranteed to end. In case of $j>k$, the last $j-k$ block rows of $B$ are zero. If the stopping criterion $E\leq \sigma^2$ is met before all $j+1$ block rows of B or all $k+1$ block columns of $V$ have been computed, $B$, $V$ and $T$ are truncated resulting in a low rank decomposition.

\section{Hybrid CPU--GPU Implementation}\label{Sec:Implementation}
Our goal is to develop a hybrid CPU–GPU implementation of Algorithm~\ref{Alg:QRAdaptiveRandomizedRangeFinder} that keeps the GPU saturated with high-arithmetic-intensity BLAS-3 operations. Following established approaches for dense linear algebra solvers in hybrid CPU–GPU environments~\cite{magma_solvers}, we aim to perform the largely sequential panel factorization on the CPU while overlapping it with the update step on the GPU. In classical Householder QR, this overlap arises naturally by partitioning the trailing update into two parts: the update of the next panel and the update of the remaining matrix. Since only the next panel must be updated before the subsequent panel factorization can begin, the CPU can proceed with that factorization while the GPU completes the larger update of the remainder.

In the case of the proposed blocked adaptive randomized range finder, Algorithm \ref{Alg:QRAdaptiveRandomizedRangeFinder}, this is not possible in a straightforward way, as the whole updated matrix $B$ is needed to compute the next sampled panel. However, if we accept a penalty in the operations count, this allows us to shorten the critical path. Instead of waiting for all of $B$ to update (step \ref{Alg:Bupdate}), we can compute a sampled column $B\Omega$ and update it independently. With this redundancy in place, the panel update and factorization on the one side happen independently of the matrix update and panel sampling on the other side. Figure \ref{Fig:DAG} shows the dependency graph of operations in both variants.

\begin{figure}[t]
\centering
\tikzset{
    box/.style={
        rectangle,
        rounded corners=6pt,
        draw=black!60,
        minimum width=2cm,
        minimum height=0.9cm,
        text width=1.8cm,
        align=center,
        font=\small,
        line width=0.9pt
    },
    samplebox/.style={
        box,
        draw=blue!60!black,
        fill=blue!12
    },
    factorbox/.style={
        box,
        draw=teal!60!black,
        fill=teal!12
    },
    updatebbox/.style={
        box,
        draw=orange!70!black,
        fill=orange!18
    },
    updatepanelbox/.style={
        box,
        draw=red!70!black,
        fill=yellow!25,
        line width=1.2pt
    },
    flow/.style={
        -{Latex[length=2.2mm]},
        draw=black!60,
        line width=0.9pt
    }
}

%-----------------------------------
% Subfigure 1
%-----------------------------------
\begin{subfigure}[t]{0.4\columnwidth}
\centering
\begin{tikzpicture}[scale=0.75, transform shape]

\node[samplebox] (sample) {Sample Panel 0};
\node[factorbox, below=0.5cm of sample]
    (factor0) {Factor Panel 0};
\node[updatebbox, below=0.5cm of factor0]
    (update) {Update $B$};
\node[samplebox, below=0.5cm of update] (sample1) {Sample Panel 1};
\node[factorbox, below=0.5cm of sample1]
    (factor1) {Factor Panel 1};

\node[below=0.45cm of factor1, font=\Large]
    (dots1) {$\vdots$};

\draw[flow] (sample) -- (factor0);
\draw[flow] (factor0) -- (update);
\draw[flow] (update) -- (sample1);
\draw[flow] (sample1) -- (factor1);
\draw[flow] (factor1) -- (dots1);

\end{tikzpicture}
\caption{Sequential implementation.}
\label{Fig:Sequential}
\end{subfigure}
\hspace{0.06\columnwidth}
%-----------------------------------
% Subfigure 2
%-----------------------------------
\begin{subfigure}[t]{0.45\columnwidth}
\centering
\begin{tikzpicture}[scale=0.75, transform shape]

% Top row
\node[samplebox, draw=none,fill=none] (dummy0) {};

\node[samplebox, right=0.45cm of dummy0] (sample0) {Sample Panel 0};

% \node[
%     samplebox,
%     draw=none,
%     fill=none,
%     right=0.45cm of sample0
% ] (dummy0) {};

% Second row
\node[samplebox, below=0.5cm of dummy0]
    (sample1) {Sample Panel 1};

\node[factorbox, below=0.5cm of sample0]
    (factor0) {Factor Panel 0};

% Third row
\node[updatebbox, below=0.5cm of sample1]
    (updateB0) {Update $B$};

\node[updatepanelbox, below=0.5cm of factor0]
    (updatepanel1) {Update Panel 1};

% Fourth row
\node[samplebox, below=0.5cm of updateB0]
    (sample2) {Sample Panel 2};

\node[factorbox, below=0.5cm of updatepanel1]
    (factor1) {Factor Panel 1};

% Fifth row
\node[updatebbox, below=0.5cm of sample2]
    (updateB1) {Update $B$};

\node[updatepanelbox, below=0.5cm of factor1]
    (updatepanel2) {Update Panel 2};

% Continuation dots
\node[below=0.45cm of updateB1, font=\Large]
    (dots1) {$\vdots$};

\node[below=0.45cm of updatepanel2, font=\Large]
    (dots2) {$\vdots$};

% Queue labels with identical top alignment
\coordinate (queueLabelRight)
    at ([yshift=1cm]sample0.north);

\coordinate (queueLabelLeft)
    at (queueLabelRight -| sample1.center);

\node[
    anchor=north,
    font=\small\bfseries,
    align=center
] at (queueLabelLeft)
    {Queue 0};

\node[
    anchor=north,
    font=\small\bfseries,
    align=center
] at (queueLabelRight)
    {Queue 1};

% Flow arrows
\draw[flow] (sample0) -- (factor0);

\draw[flow] (sample1) -- (updateB0);
\draw[flow] (sample1) -- (updatepanel1);

\draw[flow] (factor0) -- (updateB0);
\draw[flow] (factor0) -- (updatepanel1);

\draw[flow] (updateB0) -- (sample2);
\draw[flow] (updatepanel1) -- (factor1);

\draw[flow] (sample2) -- (updateB1);
\draw[flow] (sample2) -- (updatepanel2);

\draw[flow] (factor1) -- (updateB1);
\draw[flow] (factor1) -- (updatepanel2);

\draw[flow] (updateB1) -- (dots1);
\draw[flow] (updatepanel2) -- (dots2);

\end{tikzpicture}
\caption{Exposed parallelism by introduced panel update.}
\label{Fig:Parallel}
\end{subfigure}

\caption{Task-dependency graphs for two implementations of Algorithm \ref{Alg:QRAdaptiveRandomizedRangeFinder}. Arrows denote data dependencies. In the sequential implementation (a), the next panel cannot be sampled until the full update of $B$ is complete. In the overlapped implementation (b), the sampled panel is updated independently, introducing extra operations while shortening the critical path.}
\label{Fig:DAG}
\end{figure}

All operations except for the panel factorizations are suitable to run on the GPU as they are essentially matrix-matrix multiplications. Ideally, sampling a panel and updating $B$ take as long as factoring and updating a panel. Experiments presented in \autoref{Sec:Results} vary the block size and show the performance impact.

 We assume that all input matrices are available on the GPU, as this reflects the circumstances in our motivating example from deep learning, where weights and gradients reside on the GPU during the training of neural networks. We implement Algorithm \ref{Alg:QRAdaptiveRandomizedRangeFinder} with exposed parallelism (see \autoref{Fig:Parallel}) using the MAGMA software library~\cite{magma}.  As the algorithm shows structural commonalities with the standard Householder QR decomposition, the MAGMA routine \texttt{sgeqrf3\_gpu} has served as a basis for implementation. In order to achieve CPU-GPU overlap, a non-blocking computation of the squared Frobenius norm had to be devised.

\section{Numerical Stability and Performance}\label{Sec:Results}
First, we perform experiments to assess approximation quality and numerical stability of our proposed Householder (HH) approach (Algorithm \ref{Alg:QRAdaptiveRandomizedRangeFinder}) and compare it to Block Classical Gram Schmidt (BCGS) based range finders (Algorithm \ref{Alg:BlockAdaptiveRandomizedRangeFinder_detErr} from \cite{martinssonRandomizedBlockedAlgorithm2016}), where a QR decomposition is used as inner block orthogonalization. One variant performs reorthogonalization (Step \ref{Alg:BlockAdaptiveRandomizedRangeFinder_detErr:reorth} in Algorithm \ref{Alg:BlockAdaptiveRandomizedRangeFinder_detErr}) For these experiments, the algorithms were implemented in Python.

A $1024\times1024$ test matrix is constructed with prescribed singular values decaying logarithmically from $10^0$ to $10^{-14}$ with random orthogonal left and right singular vectors. All methods used a block size of $b=64$ and perform calculations in single precision. After every block, we measure the spectral-norm projection residual $\|A-QQ^TA\|_2$, where $Q$ contains all computed basis vectors in explicit form up to that point. The stopping quantities tracked by the respective methods are reported in the same figure. For the BCGS method, this refers to the 2-norm of the residual $A - QB$ that is tracked in place of $A$ (Step \ref{Alg:BlockAdaptiveRandomizedRangeFinder_detErr:residual} in Algorithm \ref{Alg:BlockAdaptiveRandomizedRangeFinder_detErr}). For the Householder variant, this refers to the Frobenius norm of the residual tracked in the scalar $\sqrt{E}$ at step \ref{Alg:QRAdaptiveRandomizedRangeFinder:E} in Algorithm \ref{Alg:QRAdaptiveRandomizedRangeFinder}. The loss of orthogonality $\|Q^TQ-I\|_2$ is also reported. All results are found in \autoref{Fig:orthogonality_loss}.

\begin{figure*}[t]
    \centering
    \includegraphics[width=\textwidth]{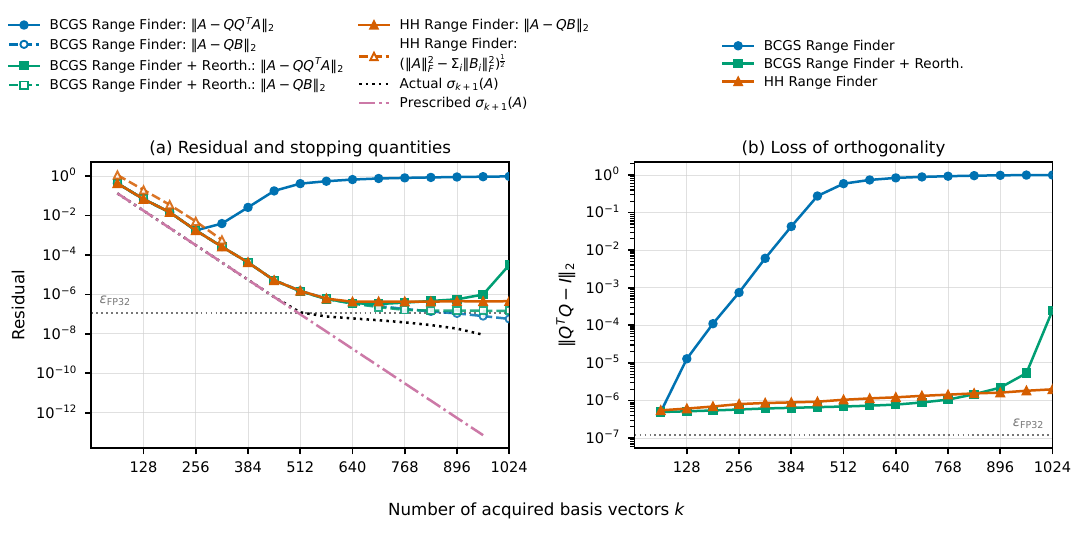}
    \caption{Residual and stopping quantities (a) and loss of orthogonality (b) for BCGS range finder (Algorithm \ref{Alg:BlockAdaptiveRandomizedRangeFinder_detErr}) with and without reorthogonalization, and the proposed Householder-based method (HH range finder, Algorithm \ref{Alg:QRAdaptiveRandomizedRangeFinder}). The actual and prescribed values of $\sigma_{k+1}(A)$ are included as reference bounds, and the horizontal dotted line marks FP32 unit roundoff.}
    \label{Fig:orthogonality_loss}
\end{figure*}

We see in \autoref{Fig:orthogonality_loss} that initially the residual goes down for all proposed methods as more basis vectors are added. Without reorthogonalization, the orthogonality for the BCGS variant starts to deteriorate immediately, and soon this is reflected in the residual $\|A-QQ^TA\|_2$. The stopping criterion, tracking $\|A - QB\|_2$ stays low, as $B$ absorbs $Q$'s defects. Reorthogonalization remedies these issues.

Both reorthogonalized BCGS and Householder residuals approach the noise floor introduced by single precision. The prescribed singular values go below machine precision, making the resulting matrix numerically low-rank. The computable, actual singular values of rounded $A$ are given as black dotted line, and represent the best approximation possible by the given number of basis vectors. Even with reorthogonalization, BCGS accuracy deteriorates near the end, as no meaningful information is incorporated anymore. The tracked stopping criterion fails to capture this. An adaptive range finder should stop before this point. Our Householder variant does not have this problem and yields a low residual, and preserves orthogonality, even under these extreme circumstances, with no need for reorthogonalization. However, it turns out that the stopping quantity based on the Frobenius norm of the panels is of limited use. At $k=384$, it does not accurately track the residual anymore due to cancellation. It becomes negative and is capped to zero, which is not represented in the figure's logarithmic scale. Whether this can be fixed on the implementation side or another stopping criterion based on the spectral norm of the lower part of $B$ is needed, remains as future work. 

We next evaluate the runtime performance of the proposed algorithm and implementation in the experiments reported in Figs.~\ref{Fig:full_barrf}, \ref{Fig:randQB}, and \ref{Fig:fixed}. In these experiments we generate random square matrices and run the adaptive range finder to completion, i.e., until a full basis has been computed. 

Runtimes were measured on a GH200 node of the JURECA-DC Evaluation Platform\footnote{\url{https://apps.fz-juelich.de/jsc/hps/jureca/evaluation-platform-overview.html\#grace-hopper-nodes} -- accessed 2026/08/08.}, containing NVIDIA’s Grace Hopper Superchip. The Grace CPU has 72 ARM cores and the GPU is a tightly coupled Hopper GPU. The node contains 480 GiB LPDDR5X and 96 GiB HBM3 memory. The NVPL library was used for all CPU-based LAPACK and BLAS calls. All experiments were performed in single precision. When we refer to "GPU versions" or similar, we mean CPU-GPU hybrid versions, as panel factorization happens on the CPU. For Figs.~\ref{Fig:full_barrf} and \ref{Fig:randQB}, the mean runtime of 5 runs after a warmup round is reported. For \autoref{Fig:fixed}, the mean of 3 runs after warmup is reported. 

\autoref{Fig:full_barrf} shows a heat map of runtimes achieved by variants of our proposed Algorithm \ref{Alg:QRAdaptiveRandomizedRangeFinder}, with varying matrix size $m=n$ and block size $b$.  The first heat map refers to a CPU-only variant, where panel factorization and matrix update are not overlapped. The center heat map shows runtime results for a GPU accelerated variant of Algorithm \ref{Alg:QRAdaptiveRandomizedRangeFinder}, where the random matrix generation, the GEMM to sample the panel, the matrix update (Step \ref{Alg:Bupdate}), and the computation of $\|B_i\|_F^2$ are realized on the GPU, using CuRAND and MAGMA, with a cuBLAS backend. No extra panel update has been introduced so that CPU and GPU work are not overlapped (see \autoref{Fig:Sequential}). The third heat map shows the performance of the two-queue variant (see \autoref{Fig:Parallel}), which introduces additional operations to enable overlap between CPU and GPU work. Both GPU-accelerated variants assume that the input matrix resides in GPU memory and leave the output there. This setup reflects our motivating machine-learning use case, in which the relevant matrices remain on the accelerator throughout the computation.

\begin{figure*}[t]
    \centering
    \includegraphics[width=\textwidth]{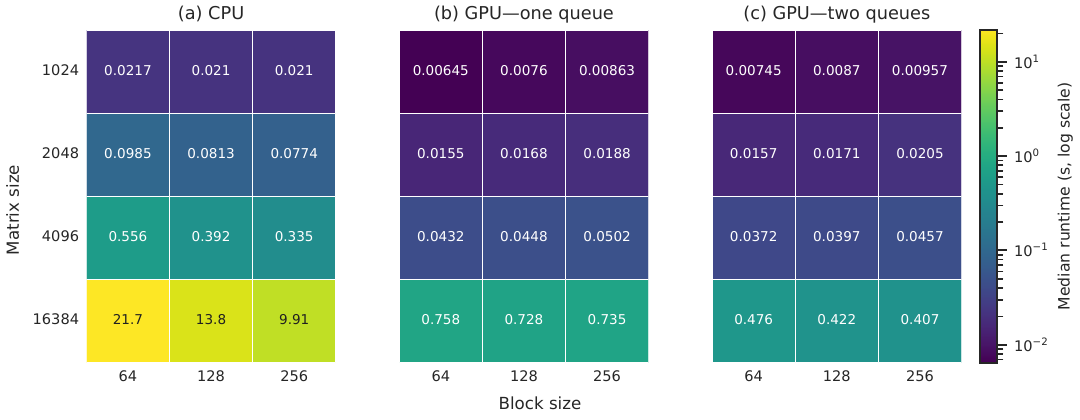}
    \caption{Measured runtimes for full runs of the Householder blocked adaptive range finder (Algorithm \ref{Alg:QRAdaptiveRandomizedRangeFinder}) on a GH200.}
    \label{Fig:full_barrf}
\end{figure*}

\autoref{Fig:full_barrf} shows that the GPU implementations substantially outperform the CPU implementation, with the advantage increasing with matrix size. For small matrices, the one- and two-queue variants perform similarly, and the additional overlap provides little benefit. For larger matrices, however, the two-queue implementation becomes clearly advantageous, reducing the runtime for the largest case from \qty{9.91}{\second} on the CPU and \qty{0.728}{\second} with one GPU queue to \qty{0.407}{\second}. This indicates that the additional parallelism exposed by overlapping panel processing with the matrix update becomes increasingly effective as the problem size grows.

A GPU-accelerated implementation of Algorithm~\ref{Alg:BlockAdaptiveRandomizedRangeFinder_detErr} is available as part of RSVDpack~\cite{martinssonRandomizedBlockedAlgorithm2016,voronin2016rsvdpackimplementationrandomizedalgorithms}. We forked and adapted the publicly available code\footnote{\url{https://github.com/sergeyvoronin/LowRankMatrixDecompositionCodes} -- accessed 2026/08/08} to run on the GH200 system and to enable a meaningful comparison with the experiments in \autoref{Fig:full_barrf}. In addition to modifications to the test routines, we replaced the explicit MKL dependency with NVPL and changed the GPU-accelerated implementation from double to single precision.

\begin{figure}[t]
    \centering
    \includegraphics[width=\linewidth]{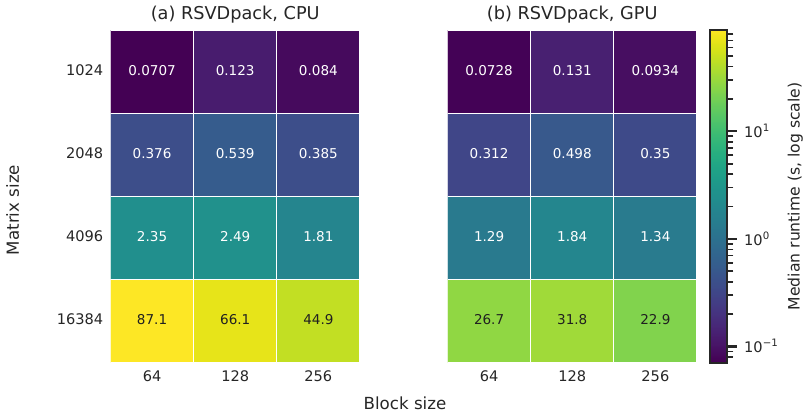}
    \caption{Measured runtimes for computing the QB factorization in RSVDpack~\cite{voronin2016rsvdpackimplementationrandomizedalgorithms} on a GH200, including host-device and device-host transfers.}
    \label{Fig:randQB}
\end{figure}

\autoref{Fig:randQB} shows results for the RSVDpack implementation of Algorithm~\ref{Alg:BlockAdaptiveRandomizedRangeFinder_detErr} using the same matrix sizes and block sizes as in \autoref{Fig:full_barrf}. Its GPU-accelerated version is only about twice as fast as the CPU version, and both are substantially slower than the corresponding variants of our proposed Householder-based adaptive range finder, Algorithm~\ref{Alg:QRAdaptiveRandomizedRangeFinder}. For $m=n=16384$, our GPU-accelerated implementation achieves a speedup of up to $56\times$ over the RSVDpack implementation. Unlike our implementation, RSVDpack takes its input from host memory and returns its output there; the reported runtimes therefore include host--device and device--host transfers. 

The experiment reported in \autoref{Fig:fixed} compares the performance of our proposed adaptive range finder, Algorithm~\ref{Alg:QRAdaptiveRandomizedRangeFinder}, with a simple fixed-rank randomized range finder as outlined in \autoref{Fig:RandRangeFinder_fixed}. We implemented three variants of the fixed-rank method, each consisting of random matrix generation, a matrix multiplication to form the sampled range, and a QR factorization. To obtain the same $QB$ output as our proposed method, we additionally compute $B=Q^TA$ by applying the resulting Householder reflectors to $A$. The CPU implementation uses LAPACK and BLAS routines provided by NVPL, while the two GPU implementations use either MAGMA or cuBLAS together with cuSOLVER. For fixed matrix dimensions $m=n=16\,384$ and block size $b=128$, we vary the target rank $r$, i.e., the number of basis vectors to be computed. For a direct runtime comparison, Algorithm~\ref{Alg:QRAdaptiveRandomizedRangeFinder} is run in a fixed-rank mode: instead of terminating when the prescribed approximation tolerance is reached, it stops after computing $r$ basis vectors.

\begin{figure}[t]
    \centering
    \includegraphics[width=\linewidth]{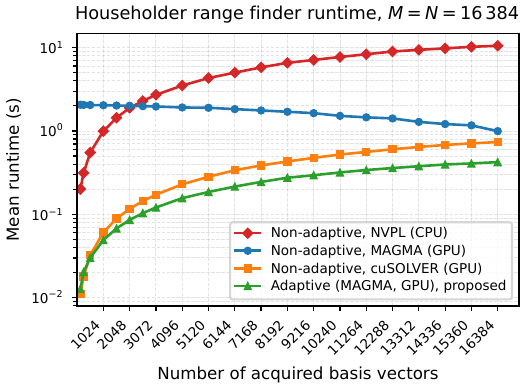}
    \caption{Runtimes of non-adaptive randomized range finder variants, compared to adaptive range finder with set number of basis vectors, a block size of $b=128$ on a GH200.}
    \label{Fig:fixed}
\end{figure}

The main advantage of the adaptive range finder is that it determines the required rank from a prescribed approximation tolerance. In contrast, a fixed-rank method may compute substantially more basis vectors than necessary to achieve the same accuracy, providing a source of potential runtime savings for the adaptive approach. However, \autoref{Fig:fixed} shows that, even when this advantage is removed by computing the same number of basis vectors, our proposed method outperforms all fixed-rank variants for large matrices. We attribute this performance advantage to the interleaving of operations in the proposed algorithm. In the fixed-rank implementations, the sampling, QR factorization, and computation of $B=Q^TA$ are performed as separate sequential stages, whereas in our method these operations are integrated into the blocked iteration. This structure enables more effective utilization of the GPU.

For small target ranks ($r=128$ and $256$), the fixed-rank range finder using cuSOLVER achieves the best performance. In this regime, Algorithm~\ref{Alg:QRAdaptiveRandomizedRangeFinder} is dominated by the CPU panel factorization and by the comparatively inefficient construction of the $T$ matrix. The MAGMA-based fixed-rank implementation exhibits a different and initially counterintuitive trend: its runtime decreases as the target rank increases. This behavior results from a particularity of MAGMA's QR routines. During the \texttt{sgeqrf} factorization, MAGMA does not construct a $T$ matrix for the final panel, since the corresponding Householder block reflector is not needed for a trailing-matrix update. Consequently, when this reflector is subsequently applied by \texttt{sormqr}, the $T$ matrix required for a GPU-based \texttt{magma\_{s}larfb} operation is unavailable, and the application falls back to a CPU-based LAPACK routine. For a large matrix and a small target rank, the final panel represents a substantial fraction of the factorization, causing this CPU operation to dominate the runtime and leaving the GPU underutilized. As the target rank increases, the relative cost of this final CPU-based application decreases, explaining the observed reduction in runtime.

\section{Towards Efficient Low-Rank Deep Learning}\label{Sec:Galore}

Methods from randomized numerical linear algebra show promise as viable tool in the context of resource-efficient low-rank deep learning. 

%Maybe delete this paragraph
In the training of a deep neural network, each layer is represented by matrices, including weights, gradients, and optimizer states. The weights are updated using gradients computed by backpropagation, typically along with optimizer states, e.g., in the popular Adam optimizer. These states encode moving averages of the gradient's first and second moments, incorporating past iteration data to guide updates more effectively.

GaLore reduces optimizer-state memory by representing these states in a low-rank dominant subspace of the gradient matrix~\cite{zhaoGaLoreMemoryEfficientLLM2024}. While GaLore treats the rank $r$ as a hyperparameter, typically chosen heuristically, our adaptive range finder instead allows the approximation tolerance $\sigma$ in \eqref{Eq:tolerance} to serve as the hyperparameter, determining a suitable rank automatically.

With an adaptive method, the dimensionality of subspaces across consecutive training steps can vary. This highlights the problem, that adding optimizer states, represented in different subspaces, is not very meaningful and can lead to deteriorated performance, even when the rank is fixed. A linear transformation can be applied to ensure subspace consistency. To realize this, a low-rank optimizer state $M_t\in\mathbb{R}^{r_{t}\times n}$ at step $t+1$ is substituted by $Q_{t+1}^TQ_{t}M_t$, where $Q_t$ contains the subspace basis of the previous step and $Q_{t+1}$ contains the current one. 

\section{Conclusion and Outlook}
We presented a Householder-based blocked adaptive randomized range finder that avoids explicit reorthogonalization and maps efficiently to hybrid CPU--GPU execution. On the GH200, the two-queue implementation reduced the runtime of the largest tested case from \qty{9.91}{\second} on the CPU to \qty{0.407}{\second}, demonstrating the benefit of overlapping CPU panel factorization with GPU-based matrix updates.

An important remaining issue is the stopping criterion. The Frobenius-norm criterion based on successively subtracting the norms of computed panels is inexpensive, but our experiments show that it can lose accuracy through cancellation near the numerical noise floor. Future work should investigate more robust implementations of this criterion as well as alternative stopping criteria, for example based on estimating the spectral norm of the remaining part of the transformed matrix. Power iterations could additionally be incorporated to improve the approximation of dominant subspaces for matrices with slowly decaying singular values.

There is also scope for further algorithmic and implementation-level optimization. Operations such as the matrix update and residual-norm computation could potentially be fused, while an implementation based on cuSOLVER could reduce software dependencies and a portable programming model such as Kokkos could facilitate targeting other accelerator architectures. If the projected matrix $B$ is not required, updating its completed block rows may be unnecessary. This case also motivates variants that leave $A$ unchanged and instead build up $T$ to provide a compact WY representation of the computed basis. This representation can be used to apply accumulated transformations to newly sampled panels~\cite{BenKP17}. A detailed operation and data-movement analysis would be needed to determine when such variants are advantageous.

Applying the proposed method in low-rank deep-learning frameworks such as GaLore will require extensions to distributed matrix layouts arising from fully sharded data parallelism and model parallelism. 

Recent developments in reorthogonalized block Gram--Schmidt methods make them a promising alternative to revisit for adaptive randomized range finders~\cite{Carson25}, in particular in distributed settings.

Overall, the results demonstrate the potential of the proposed Householder-based formulation as a numerically robust and efficient GPU-oriented approach to adaptive low-rank matrix approximation.

\bibliographystyle{IEEEtran}
\bibliography{references}

\end{document}